\documentclass[reqno]{amsart}
\usepackage{hyperref}

\def\N{{\mathbb{N}}}

\def\R{{\mathbb{R}}}

\def\B{{\mathbb{B}}}

\begin{document}
\title[rank theorem]{Rank theorem in infinite dimension and Lagrange multipliers}

\author[Blot]
{Jo\"el Blot}

\address{Jo\"el Blot: Laboratoire SAMM EA 4543,\newline
Universit\'{e} Paris 1 Panth\'{e}on-Sorbonne, centre P.M.F.,\newline
90 rue de Tolbiac, 75634 Paris cedex 13,
France.}
\email{blot@univ-paris1.fr}
\date{January, 28, 2018}
\begin{abstract} We use an extension to the infinite dimension of the Rank Theorem of the differential calculus to establish a Lagrange theorem for optimization problems in Banach spaces. We provide an application to variational problems on a space of bounded sequences under equality constraints. 
\end{abstract}
\maketitle
\noindent
Keywords : Rank theorem, Lagrange multipliers, Banach spaces.\\
Classification MSC 2010: 49K27, 49K30.
\numberwithin{equation}{section}
\newtheorem{theorem}{Theorem}[section]
\newtheorem{lemma}[theorem]{Lemma}
\newtheorem{example}[theorem]{Example}
\newtheorem{remark}[theorem]{Remark}
\newtheorem{definition}[theorem]{Definition}
\newtheorem{proposition}[theorem]{Proposition}
\newtheorem{corollary}[theorem]{Corollary}
\section{Introduction}
Firstly we recall the extension of the rank theorem to the infinite dimension as it is established in \cite{Bl1} (Section 3).
\vskip1mm
\noindent
Secondly we establish a theorem of existence of Lagrange multipliers in Banach spaces for maximization problems under equality constraints (Section 4) like the following one
\[
(M) 
\left\{
\begin{array}{rl}
{\rm Maximize} & J(x)\\
{\rm subject\; to}& f(x) = 0
\end{array}
\right.
\]
where the functional $J$ and the mapping $f$ are defined on an open subset of a Banach space, and the mapping $f$ takes its values in a Banach space. Among the classical multiplier rules, when the differential of $f$ at the solution is surjective we obtain the existence of Lagrange multipliers without a multiplier before the criterion, i.e. $\hat{x}$ being a solution of $(M)$, we have $DJ(\hat{x}) = \lambda \circ Df(\hat{x})$ where $\lambda$ belongs to the topological dual space of the co-domain of $f$, and when we have only the closedness of the image of the differential of $f$ at the solution, there exists a multiplier rule with a multiplier before the criterion, i.e. $\lambda_0 DJ(\hat{x}) = \lambda \circ Df(\hat{x})$ with $(\lambda_0, \lambda) \neq (0,0)$ and $\lambda_0 \in \R$. Our aim is to establish a necessary optimality condition in the form  $DJ(\hat{x}) = \lambda \circ Df(\hat{x})$ wihtout assuming the surjectivity of $Df(\hat{x})$. To reach this aim we establish a result by using assumptions which are issued from a generalization to the infinite-dimensional Banach spaces of the classical Rank Theorem.
\vskip1mm
\noindent
Thirdly, in Section 5, we establish results on the space of the bounded sequences with values in a Banach space to prepare the following section.
\vskip1mm
\noindent
Lastly, in Section 6, we apply our theorem on Lagrange multipliers to a variational problem in infinite horizon and in discrete time on a space of bounded sequences under equality constraints.
%
\section{Notation}
When $E$ and $F$ are sets, when $f :E \rightarrow F$ is a mapping, when $E_0 \subset E$ and $F_0 \subset F$ are such that $f(E_0) \subset F_0$, we define the {\it abridgement} of $f$ (relatively to $E_0$ and $F_0$) as $ab f; E_0 \rightarrow F_0$ by setting $ab f(x) := f(x)$; this notation comes from \cite{RF} (p. 12).\\
When $X$ is a topological space, $\mathcal{O}(X)$ denotes the topology of $X$, and when $x\in X$, $\mathcal{O}_x(X)$ denotes the set of the open neighborhoods of $x$ in $X$.\\
The topological interior is denoted by $Int$.\\
When $E$ is a Banach space and $E_1$, $E_2$ are closed vector subspaces of $E$, the writting $E = E_1 \oplus^a E_2$ (respectively $E = E_1 \oplus E_2$) means the algebraic (respectively topological) direct sum, i.e. the mapping $(x_1, x_2) \mapsto x_1 + x_2$, from $E_1 \times E_2$ into $E$ is an isomorhism of vector spaces (respectively an isomorphism of topological vector spaces). When $E_1$ is a closed vector subspace of $E$, to say that $E_1$ is {\it topologically complemented} in $E$ means that there exists a closed vector subspace of $E$, say $E_2$, such that $E = E_1 \oplus E_2$.\\
The letter $D$ denotes the Fr\'echet differentiation, and when $i \in \{ 1,2 \}$, $D_i$ denotes the partial Fr\'echet differentiation of a mapping defined on a product space $E_1 \times E_2$ with respect to the $i^{\rm th}$ variable.
\vskip1mm
\noindent
When $X$ and $Y$ are real Banach spaces, $\mathcal{L}(X,Y)$ denotes the space of the linear continuous mappings from $X$ into $Y$, 
and if $L \in \mathcal{L}(X,Y)$, its norm is $\Vert L \Vert_{\mathcal{L}} := \sup \{ \Vert L(x) \Vert : x \in X, \Vert x \Vert \leq 1 \}$. The topological dual space of $X$, $\mathcal{L}(X, \R)$, is denoted by $X^*$. When $L \in \mathcal{L}(X,Y)$, the adjoint of $L$ is $L^* \in \mathcal{L}(Y^*, X^*)$ defined by $L^* \varphi := \varphi \circ L$ for all $\varphi \in Y^*$. When $M \subset X$, the orthogonal of $M$ is $M^{\perp} := \{ \varphi \in X^* : \forall x \in M, \varphi(x) = 0 \}$.
\vskip1mm
\noindent
When $A \in \mathcal{O}(X)$, $C^1(A,Y)$ denotes the space of the continuously Fr\'echet differentiable mappings from $A$ into $Y$.
When $f : A \rightarrow Y$ is a mapping and $y \in Y$, we set $[f = y] := \{ x \in A : f(x) = y \}$.\\
When $M \subset X$ and $a \in M$, the tangent space of $M$ at $a$ is $T_a M := \{ \alpha'(0) : \alpha \in C^1((- \epsilon, \epsilon), M), \alpha(0) = a \}$.
\vskip2mm
\noindent
We consider $\Omega$ as $\N$ or $\N_*  := \N \setminus \{ 0 \}$. When $E$ is a set, $E^{\Omega}$ denotes the space of the sequences defined on $\Omega$ with values in $E$.\\
An element of $E^{\Omega}$ will be denoted by $\underline{x} = (x_t)_{t \in \Omega}$.\\
When $E$ is a normed space, $\underline{x} \in {\ell}^{ \infty}(\Omega,E)$ denotes the set of the bounded sequences from $\Omega$ into $E$; we set $\Vert \underline{x}  \Vert_{\infty} := \sup_{ t \in \Omega} \Vert x_t \Vert$.\\
When $A \subset E$, ${\ell}_{\infty}(\Omega, A) := \{ \underline{x} \in {\ell}_{\infty}(\Omega, E) : \forall t \in \Omega, x_t \in A \}$.\\
We also write $B_{\infty}(\underline{x},r) := \{ \underline{u} \in {\ell}_{\infty}(\Omega,E) : \Vert \underline{u}  - \underline{x}  \Vert_{\infty} < r \}$.\\
${\ell}_1(\Omega, E)$ denotes the space of the sequences $\underline{x} = (x_t)_{t \in \Omega} \in E^{\Omega}$ such that\\
 $\sum_{t \in \Omega} \Vert p_t \Vert < + \infty$.\\
$c_0(\Omega, E)$ denotes the space of the sequences $\underline{x} = (x_t)_{t \in \Omega} \in E^{\Omega}$ such that \\
$\lim_{t \rightarrow + \infty} x_t = 0$.
§§§§§§§§§§§§§§§§§§§§§§§§§§§§§§§§§§§§§§§§§§§§§§§§
\section{The rank theorem in infinite dimension}
In this section we recall the Rank Theorem in infinite dimension and we establish several consequences of this theorem which are useful for the sequel. Under the assumptions of the Rank Theorem we describe the tangent space of a level set. As we indicate in \cite{Bl1}, there exist other generalizations of the classical Rank Theorem to the infinite-dimensional Banach spaces.
\vskip1mm
\noindent 
$X$ and $Y$ are real Banach spaces. The following result is given in \cite{Bl} and it is established in Theorem 1 and in Theorem 5 of \cite{Bl1}.
\begin{theorem}\label{th31} Let $A \in \mathcal{O}(X)$, $f \in C^1(A,Y)$, $\hat{x} \in A$, and we set $\hat{y} := f(\hat{x})$.\\
We assume that $E_2 := Ker Df(\hat{x})$ is topologically complemented in $X$ and $F_1 := Im Df(\hat{x})$ is closed and topologically complemented in $Y$; and so $X = E_1 \oplus E_2$ where $E_1$ is a closed vector subspace of $X$, and $Y = F_1 \oplus F_2$ where $F_2$ is a closed vector subspace of $Y$. We also assume that the following condition is fulfilled:
$$(*) \hskip5mm \exists A_0 \in \mathcal{O}_{\hat{x}}(A) \; {\rm s.t.} \; \forall x \in A_0, Im Df(x) \cap F_2 = \{0 \}.$$
Then the following assertions hold.
\begin{enumerate}
\item[(i)] $\exists V_1 \in \mathcal{O}_{D_1f_1(\hat{x})^{-1}(\hat{y}_1)}(E_1)$, $\exists V_2 \in \mathcal{O}_{\hat{x}_2}(E_2)$, $\exists B \in \mathcal{O}_{\hat{x}}(X)$, $\exists \psi : V_1 \times V_2 \rightarrow B$ a $C^1$ diffeormorphism,
$\exists W \in \mathcal{O}_{(\hat{y}_1,0)}(X)$, $\exists \Omega_1 \in \mathcal{O}_{\hat{y}_1}(F_1)$, $\exists \Omega_2 \in \mathcal{O}_{\hat{y}_2}(F_2)$, 
$\exists \phi : W \rightarrow \Omega_1 \times \Omega_2$ a $C^1$ diffeomorphism such that
$\phi^{-1} \circ f \circ \psi = Df(\hat{x})$ on $V_1 \times V_2$. 
\item[(ii)] $\exists G_1 \in \mathcal{O}_{\hat{x}_1}(E_1)$, $\exists G_2 \in \mathcal{O}_{\hat{x}_2}(E_2)$, $\exists \xi \in C^1(G_2,G_1)$ such that \\
$[f = \hat{y}] \cap (G_1 \times G_2) = \{ (\xi(x_2), x_2) : x_2 \in G_2 \}$.
\end{enumerate}
\end{theorem}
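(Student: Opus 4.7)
The plan is to reduce $f$ to its derivative through two $C^1$-diffeomorphisms, by exploiting condition $(*)$ to show that, in well-chosen coordinates, the $F_2$-component of $f$ carries no dependence on the $E_2$-variable. I decompose $X = E_1 \oplus E_2$ and $Y = F_1 \oplus F_2$ with associated projections $\pi_i$ and $q_j$, set $f_j := q_j \circ f$, and let $\sigma := D_1 f_1(\hat{x})$. From $\mathrm{Im}\,Df(\hat{x}) = F_1$ one obtains $Df_2(\hat{x}) = 0$, and from $\mathrm{Ker}\,Df(\hat{x}) = E_2$ one obtains $D_2 f_1(\hat{x}) = 0$; hence $Df(\hat{x})(h_1, h_2) = \sigma(h_1)$, and $\sigma : E_1 \to F_1$ is a continuous linear bijection between Banach spaces, therefore an isomorphism by the open mapping theorem.

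The first concrete step is to construct $\psi$ via the inverse function theorem. I consider the flattening map $\Phi(x_1, x_2) := (f_1(x_1, x_2), x_2)$ on $A_0$, whose differential at $\hat{x}$ sends $(h_1, h_2)$ to $(\sigma(h_1), h_2)$ and is an isomorphism by the step above. Thus $\Phi$ restricts to a local $C^1$-diffeomorphism from some open $B \ni \hat{x}$ onto a product neighbourhood $\tilde V_1 \times V_2 \subset F_1 \times E_2$ of $(\hat{y}_1, \hat{x}_2)$. Setting $V_1 := \sigma^{-1}(\tilde V_1)$, an open neighbourhood of $D_1 f_1(\hat{x})^{-1}(\hat{y}_1)$ in $E_1$, I define $\psi(x_1, x_2) := \Phi^{-1}(\sigma(x_1), x_2)$, which is a $C^1$-diffeomorphism from $V_1 \times V_2$ onto $B$.

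The essential step is to show that $g := f_2 \circ \psi$ depends only on $x_1$. Differentiating $f_1 \circ \psi(x_1, x_2) = \sigma(x_1)$ with respect to $x_2$ yields $\mathrm{Im}\,D_2 \psi(x_1, x_2) \subset \mathrm{Ker}\,Df_1(\psi(x_1, x_2))$; and for every $x \in A_0$, condition $(*)$ rephrases as $\mathrm{Ker}\,Df_1(x) \subset \mathrm{Ker}\,Df_2(x)$, since $Df_1(x)(h) = 0$ forces $Df(x)(h) = Df_2(x)(h) \in \mathrm{Im}\,Df(x) \cap F_2 = \{0\}$. Hence $D_2 g \equiv 0$, and after shrinking $V_2$ to a convex neighbourhood one concludes $g(x_1, x_2) = g(x_1, \hat{x}_2) =: h(x_1)$. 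Setting $\chi := h \circ \sigma^{-1}$ on $\tilde V_1$, I introduce the shear $\phi(y_1, y_2) := (y_1, y_2 + \chi(y_1))$, a $C^1$-diffeomorphism with explicit inverse $(z_1, z_2) \mapsto (z_1, z_2 - \chi(z_1))$; since $\chi(\hat{y}_1) = f_2(\psi(\sigma^{-1}(\hat{y}_1), \hat{x}_2)) = f_2(\hat{x}) = \hat{y}_2$, the map $\phi$ sends $(\hat{y}_1, 0)$ to $\hat{y}$. Choosing product neighbourhoods $\Omega_1 \times \Omega_2$ of $\hat{y}$ in $F_1 \times F_2$ and setting $W := \phi^{-1}(\Omega_1 \times \Omega_2)$, and then shrinking $V_1, V_2$ so that $f(\psi(V_1 \times V_2)) \subset W$, the direct computation $\phi^{-1} \circ f \circ \psi(x_1, x_2) = \phi^{-1}(\sigma(x_1), h(x_1)) = (\sigma(x_1), 0) = Df(\hat{x})(x_1, x_2)$ establishes (i).

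Assertion (ii) then follows by reading the level set in the flattened coordinates: $f(x) = \hat{y}$ becomes $Df(\hat{x})(x_1, x_2) = \phi^{-1}(\hat{y}) = (\hat{y}_1, 0)$, i.e.\ $x_1 = \sigma^{-1}(\hat{y}_1)$. Using $\pi_2 \circ \psi(\sigma^{-1}(\hat{y}_1), x_2) = x_2$, the level set in $B$ is parametrised by $x_2 \mapsto (\xi(x_2), x_2)$ with $\xi := \pi_1 \circ \psi(\sigma^{-1}(\hat{y}_1), \cdot) \in C^1(V_2, E_1)$; picking product neighbourhoods $G_1 \times G_2 \subset B$ of $\hat{x}$ and shrinking $G_2$ so that $\xi(G_2) \subset G_1$ gives the conclusion. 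The only delicate point in the whole argument is the rephrasing of $(*)$ as $\mathrm{Ker}\,Df_1 \subset \mathrm{Ker}\,Df_2$ in the third step, which is what forces the $F_2$-component of $f$ in the flattened coordinates to collapse into a function of $x_1$ alone; everything else is the inverse function theorem combined with a shear.
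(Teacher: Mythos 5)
The paper does not actually prove Theorem~\ref{th31}: it is quoted verbatim from \cite{Bl1} (Theorems 1 and 5), so there is no in-paper argument to compare yours against. Judged on its own, your proof is correct and is the standard two-diffeomorphism argument for rank-type theorems: flatten the source by applying the inverse function theorem to $\Phi=(f_1,\pi_2)$ (legitimate here because $\sigma=D_1f_1(\hat{x}):E_1\to F_1$ is a continuous bijection between Banach spaces, hence an isomorphism by the open mapping theorem --- this is exactly where the closedness of $\mathrm{Im}\,Df(\hat{x})$ and the two topological complementations are consumed), then use $(*)$, correctly rephrased as $\mathrm{Ker}\,Df_1(x)\subset \mathrm{Ker}\,Df_2(x)$ on $A_0$, to show $D_2(f_2\circ\psi)\equiv 0$, and finish with a shear on the target. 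Your identification of that rephrasing as the crux is right; it is the infinite-dimensional surrogate for ``constant rank,'' consistent with the paper's remark that $(*)$ forbids $\mathrm{Im}\,Df(x)$ from being too big. Two small bookkeeping points, neither a gap: you need $B\subset A_0$ for the equicontinuity-free use of $(*)$ along $\psi$ (guaranteed since you build $\Phi$ on $A_0$), and in the last shrinking step the containment you need is $f(\psi(V_1\times V_2))\subset\Omega_1\times\Omega_2=\phi(W)$ rather than $\subset W$; since $f\circ\psi(x_1,x_2)=\phi(\sigma x_1,0)$, this just says $(\sigma x_1,0)\in W$, which holds after shrinking $V_1$. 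The derivation of (ii) from the flattened coordinates, with $\xi=\pi_1\circ\psi(\sigma^{-1}(\hat{y}_1),\cdot)$, is also correct and matches the formulas $\xi(\hat{x}_2)=\hat{x}_1$ and $D\xi(\hat{x}_2)=0$ recorded in Proposition~\ref{prop32}.
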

\vskip2mm
Note that the equality of the conclusion (i) can be rewritten as follows (cf. Theorem 1 of \cite{Bl1})
\begin{equation}\label{eq32}
\forall (x_1,x_2) \in V_1 \times V_2, \;\;  \phi^{-1} \circ f \circ \psi(x_1,x_2) = (D_1f_1(\hat{x}) x_1, 0)
\end{equation}
which a kind of local linearization of $f$.\\
When $X$ and $Y$ are finite-dimensional, the assumption $(*)$ of Theorem \ref{th31} is equivalent to the constancy of the rank of $Df(x)$ on a neighborhood of $\hat{x}$, cf. Proposition 4 in \cite{Bl1}. We can provide a heuristic meaning of the condition $(*)$: when $x$ is closed to $\hat{x}$, due to the continuity of $Df$, $Df(x)$ is closed to $Df(\hat{x})$ and $ImDf(x)$ cannot be "smaller" than $ImDf(\hat{x})$; this is a consequence of the openness of the set of the invertible linear continuous operators  in the normed space of the linear continuous operators. And so to have $ImDf(x)$ isomorphic to $ImDf(\hat{x})$, it suffices to forbid $ImDf(x)$ to be too "big". The condition $ImDf(x) \cap F_2 = \{ 0 \}$ is a way to forbid $ImDf(x)$ to be too "big". 
\vskip1mm
Now we describe consequences of Theorem \ref{th31}.
\begin{proposition}\label{prop32} In the setting and under the assumptions of Theorem \ref{th31}, the following assertions hold.
\begin{enumerate}
\item[(i)] $Df_2(\hat{x}) = 0$.
\item[(ii)] $D_2f_1 (\hat{x}) = 0$.
\item[(iii)] $\xi(\hat{x}_2) = \hat{x}_1$.
\item[(iv)] $D \xi(\hat{x}_2) = - D_1f_1(\hat{x})^{-1} \circ D_2f_1(\hat{x}) = 0$.
\end{enumerate}
\end{proposition}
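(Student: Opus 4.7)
The plan is to unpack the direct-sum decompositions $X = E_1 \oplus E_2$ and $Y = F_1 \oplus F_2$ supplied by the hypotheses of Theorem \ref{th31}, writing $f_i := p_i \circ f$ where $p_i$ is the continuous projection of $Y$ onto $F_i$, and setting $\hat{x}_i, \hat{y}_i$ via the analogous projections. Items (i) and (ii) are then one-line consequences of the definitions of $F_1$ and $E_2$: since $Im\, Df(\hat{x}) \subset F_1$, composing with $p_2$ annihilates $Df(\hat{x})$ and yields $Df_2(\hat{x}) = 0$; since $E_2 = Ker\, Df(\hat{x})$, the restriction $Df_1(\hat{x})|_{E_2} = p_1 \circ Df(\hat{x})|_{E_2}$ vanishes, which is (ii).

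Item (iii) falls out directly from Theorem \ref{th31}(ii): the intersection $[f = \hat{y}] \cap (G_1 \times G_2)$ coincides with the graph of $\xi$, and $\hat{x} = (\hat{x}_1, \hat{x}_2)$ must lie in this intersection (after shrinking neighborhoods if necessary), forcing $\hat{x}_1 = \xi(\hat{x}_2)$.

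For item (iv) I follow the classical implicit-function computation. Theorem \ref{th31}(ii) together with (iii) gives the identity $f_1(\xi(x_2), x_2) = \hat{y}_1$ for all $x_2$ in a neighborhood of $\hat{x}_2$; differentiating at $\hat{x}_2$ via the chain rule produces
\[
D_1 f_1(\hat{x}) \circ D\xi(\hat{x}_2) + D_2 f_1(\hat{x}) = 0.
\]
The first equality of (iv) then amounts to inverting $D_1 f_1(\hat{x}) \in \mathcal{L}(E_1, F_1)$, and the second follows by substituting (ii).

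The main (and essentially only) point requiring care is the invertibility of $D_1 f_1(\hat{x})$: injectivity comes from $E_1 \cap Ker\, Df(\hat{x}) = \{0\}$, surjectivity onto $F_1$ follows from $F_1 = Im\, Df(\hat{x})$ combined with the fact that $Df(\hat{x})$ vanishes on $E_2$, so its image is already exhausted by $E_1$, and continuity of the inverse is furnished by Banach's open mapping theorem applied to the closed subspaces $E_1$ and $F_1$. Item (i) is precisely what allows the replacement of $Df(\hat{x})|_{E_1}$ by $D_1 f_1(\hat{x})$ in this step, closing the argument.
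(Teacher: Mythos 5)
Your proof is correct and follows essentially the same route as the paper: (i) and (ii) by composing $Df(\hat{x})$ with the projections and using $Im\,Df(\hat{x})=F_1$ and $E_2=Ker\,Df(\hat{x})$, (iii) by reading off the graph description in Theorem \ref{th31}(ii), and (iv) by differentiating the identity $f_1(\xi(x_2),x_2)=\hat{y}_1$ and applying (ii). Your added verification that $D_1f_1(\hat{x})$ is a Banach-space isomorphism from $E_1$ onto $F_1$ is a sound supplement that the paper leaves implicit in the setup of Theorem \ref{th31}.
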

\begin{proof} $\pi_2 : Y \rightarrow F_2$ and $p_2 : X \rightarrow E_2$ denote the projections.\\
 {\bf (i)} $f_2 = \pi_2 \circ f $ implies $Df_2(\hat{x}) = \pi_2 \circ Df(\hat{x}) = 0$ since $Im Df(\hat{x}) \cap F_2 = \{ 0 \}$.\\
{\bf (ii)} $D_2f_1(\hat{x}) = Df_1(\hat{x}) \circ p_2 = 0$ since $E_2 := Ker Df(\hat{x})$.\\
{\bf (iii)} From Theorem \ref{th31}(ii), since $f(\hat{x}_1, \hat{x}_2) = \hat{y}$, we necessarily obtain $\hat{x}_1= \xi(\hat{x}_2)$.\\
{\bf (iv)} Using (ii) of Theorem \ref{th31}, we have, for all $x_2 \in G_2$, $f_1(\xi (x_2), x_2) = \hat{y}_2$. Differentiating this equality with respect to $x_2$ at $\hat{x}_2$, we obtain 
$0= D_1f(\hat{x}) \circ D \xi (\hat{x}_2) + D_2f(\hat{x})$ which implies the announced formulas.
\end{proof}
\begin{proposition}\label{prop33} In the setting and under the assumptions of Theorem \ref{th31} we have $T_{\hat{x}} [f = \hat{y}] = Ker Df(\hat{x})$.
\end{proposition}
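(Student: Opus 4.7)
The plan is to prove both inclusions in $T_{\hat{x}}[f=\hat{y}] = \mathrm{Ker}\, Df(\hat{x})$ separately, using the local parametrization of the level set provided by Theorem \ref{th31}(ii) together with the derivative computations of Proposition \ref{prop32}.

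For the inclusion $T_{\hat{x}}[f=\hat{y}] \subset \mathrm{Ker}\, Df(\hat{x})$, I would take an arbitrary tangent vector $v = \alpha'(0)$ where $\alpha \in C^1((-\epsilon,\epsilon),[f=\hat{y}])$ with $\alpha(0) = \hat{x}$. Then $f \circ \alpha$ is constant equal to $\hat{y}$ on $(-\epsilon,\epsilon)$, and differentiating at $0$ with the chain rule yields $Df(\hat{x})\cdot v = 0$. This direction is essentially formal and requires no use of the rank hypothesis; it would hold for any $C^1$ map at any point of a level set.

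For the reverse inclusion $\mathrm{Ker}\, Df(\hat{x}) \subset T_{\hat{x}}[f=\hat{y}]$, I would exploit Theorem \ref{th31}(ii) to produce an explicit curve. Writing $\hat{x} = \hat{x}_1 + \hat{x}_2 \in E_1 \oplus E_2$ and recalling that $\mathrm{Ker}\, Df(\hat{x}) = E_2$, fix $v \in E_2$ and define, for $t$ in a small interval around $0$,
\begin{equation*}
\alpha(t) := \xi(\hat{x}_2 + tv) + (\hat{x}_2 + tv),
\end{equation*}
where $\xi \in C^1(G_2,G_1)$ is the map given by Theorem \ref{th31}(ii). For $|t|$ small enough we have $\hat{x}_2 + tv \in G_2$, so that $\alpha(t) \in [f=\hat{y}] \cap (G_1 \times G_2) \subset [f=\hat{y}]$, and $\alpha$ is clearly of class $C^1$. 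By Proposition \ref{prop32}(iii), $\xi(\hat{x}_2) = \hat{x}_1$, hence $\alpha(0) = \hat{x}$. Differentiating and using Proposition \ref{prop32}(iv) to get $D\xi(\hat{x}_2) = 0$, we obtain $\alpha'(0) = D\xi(\hat{x}_2)\cdot v + v = v$, so $v \in T_{\hat{x}}[f=\hat{y}]$.

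The only real subtlety is to ensure that the definition of $\alpha$ lands inside the admissible chart domain, which is handled by openness of $G_2$ around $\hat{x}_2$ together with the identification of $G_1 \times G_2$ with an open subset of $X$ via the topological direct sum. There is no genuine obstacle: once Theorem \ref{th31}(ii) and the vanishing of $D\xi(\hat{x}_2)$ from Proposition \ref{prop32}(iv) are in hand, both inclusions follow immediately, so the argument is essentially a bookkeeping exercise combining the local graph representation of the level set with elementary chain-rule computations.
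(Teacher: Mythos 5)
Your proof is correct and follows essentially the same route as the paper: the first inclusion by differentiating $f\circ\alpha\equiv\hat{y}$ along an arbitrary tangent curve, and the reverse inclusion by constructing the explicit curve $\theta\mapsto(\xi(\hat{x}_2+\theta v_2),\hat{x}_2+\theta v_2)$ from Theorem \ref{th31}(ii) and invoking Proposition \ref{prop32}(iii) and (iv) to identify $\alpha(0)=\hat{x}$ and $\alpha'(0)=v$. No gaps.
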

\begin{proof}
Let $v \in T_{\hat{x}} [f = \hat{y}]$. Hence there exists $c \in C^1((- \epsilon, \epsilon), X)$, where $\epsilon > 0$, such that $c((- \epsilon, \epsilon)) \subset  [f = \hat{y}]$, $c(0) = \hat{x}$ and $c'(0) = v$. Since $f(c(\theta)) = \hat{y}$ for all $\theta \in (-\epsilon, \epsilon)$, differentiating this equality with respect to $\theta$ at $0$, we obtain $0 = Df(c(0))c'(0) = Df(\hat{x}) v$, and so $v \in Ker Df(\hat{x})$. We have proven that $T_{\hat{x}} [f = \hat{y}] \subset Ker Df(\hat{x})$.
\vskip1mm
\noindent
To prove the inverse inclusion, we consider $v=(v_1,v_2) \in KerDf(\hat{x})$. Therefore we have $v_1 =0$ after the definition of $E_1$, $E_2$, and so $v=(0,v_2)$. Using Theorem \ref{th31}(ii), since $G_2$ is open, there exists $\epsilon > 0$ such that, for all $\theta \in (-\epsilon, \epsilon)$, we have $\hat{x}_2 + \theta v_2 \in G_2$. We define $c : (-\epsilon, \epsilon) \rightarrow X$ by setting $c(\theta) := (\xi(\hat{x}_2 + \theta v_2 ), \hat{x}_2 + \theta v_2 ) \in [f = \hat{y}] \cap (G_1 \times G_2)$. Note that we have $c(0) = (\xi(\hat{x}_2), \hat{x}_2) = \hat{x}$ after Proposition \ref{prop32}(iii). Note that we have $c'(0) = (D \xi(\hat{x}_2) v_2, v_2) = (0, v_2) = v$ after Proposition \ref{prop32}(iv) and so we have $v \in T_{\hat{x}} [f = \hat{y}]$. We have proven that $Ker Df(\hat{x}) \subset T_{\hat{x}} [f = \hat{y}]$, ansd so we have proven  the announced equality.
\end{proof}
\section{Lagrange multipliers in Banach spaces}
In this section we establish a theorem of existence of Lagrange multipliers for problem $(M)$ (written in Introduction) by using the Rank Theorem in infinite dimension. The interest of this result is to avoid a surjectivity on the differential of the equality constraint and nevertheless to avoid the presence of a multiplier before the criterion.
\vskip1mm
\noindent
Let $X$ and $Y$ be real Banach spaces, $A \in \mathcal{O}(A)$, $J : A \rightarrow \R$ be a functional, and $f : A \rightarrow Y$ be a mapping. We consider the problem $(M)$.
\begin{theorem}\label{th41}
Let $\hat{x}$ be a local solution of the problem $(M)$. We assume that the following conditions are fulfilled.
\begin{enumerate}
\item[(a)] $J$ is Fr\'echet differentiable at $\hat{x}$ and $f$ is of class $C^1$ on a neighborhood of $\hat{x}$.
\item[(b)] $E_2 := Ker Df(\hat{x})$ is topologically complemented in $X$; i.e. $X = E_1 \oplus E_2$ where $E_1$ is a closed vector subspace of $X$. $F_1 := Im Df(\hat{x})$ is closed and topologically complemented in $Y$; i.e. $Y = F_1 \oplus F_2$ where $F_2$ is a closed vector subspace of $Y$.
\item[(c)] There exists $A_0 \in \mathcal{O}_{\hat{x}}(A)$ such that, for all $x \in A_0$, $Im Df(x) \cap F_2 = \{0 \}$.
\end{enumerate}
Then there exists $\lambda \in Y^*$ such that $DJ(\hat{x}) = \lambda \circ Df(\hat{x})$.
\end{theorem}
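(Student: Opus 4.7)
The plan is to reduce the identity $DJ(\hat{x}) = \lambda \circ Df(\hat{x})$ to two ingredients: first, that $DJ(\hat{x})$ vanishes on $Ker Df(\hat{x}) = E_2$; second, that the restriction $Df(\hat{x})|_{E_1} : E_1 \to F_1$ is a topological isomorphism of Banach spaces. Once these are in hand, $\lambda$ is obtained by inverting this restriction on $F_1$, composing with $DJ(\hat{x})$, and extending trivially on $F_2$ via the continuous projection $\pi_1 : Y \to F_1$ coming from $Y = F_1 \oplus F_2$.

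For the first ingredient, I would invoke Proposition \ref{prop33}, which, under hypotheses (a)--(c), gives $T_{\hat{x}}[f = 0] = Ker Df(\hat{x}) = E_2$. Fix $v \in E_2$. By the definition of the tangent space, there exist $\epsilon > 0$ and $c \in C^1((-\epsilon, \epsilon), X)$ with $c((-\epsilon, \epsilon)) \subset [f = 0]$, $c(0) = \hat{x}$, and $c'(0) = v$. Since $\hat{x}$ is a local solution of $(M)$, the real function $t \mapsto J(c(t))$ attains a local maximum at $t = 0$; Fermat's rule combined with the chain rule then yields $0 = DJ(\hat{x}) v$. As $v$ is arbitrary in $E_2$, we conclude $DJ(\hat{x})|_{E_2} = 0$.

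For the second ingredient, since $Ker Df(\hat{x}) = E_2$ and $Im Df(\hat{x}) = F_1$, the restriction $T := Df(\hat{x})|_{E_1} : E_1 \to F_1$ is a continuous bijection between closed subspaces of Banach spaces, hence a topological isomorphism by the Banach isomorphism theorem. Denote by $\iota_1 : E_1 \hookrightarrow X$ the inclusion and define
\[
\lambda := DJ(\hat{x}) \circ \iota_1 \circ T^{-1} \circ \pi_1 \in Y^*.
\]
To verify the identity, write any $x \in X$ as $x = x_1 + x_2$ with $x_i \in E_i$. Then $Df(\hat{x}) x = T x_1 \in F_1$, so $\pi_1(Df(\hat{x}) x) = T x_1$, hence $T^{-1} \pi_1 Df(\hat{x}) x = x_1$, and thus $\lambda(Df(\hat{x}) x) = DJ(\hat{x}) x_1 = DJ(\hat{x}) x$, where the last equality uses $DJ(\hat{x}) x_2 = 0$ from the first step.

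The only delicate point is the vanishing of $DJ(\hat{x})$ on $E_2$; the rest is a straightforward exercise in the Banach isomorphism theorem and the two given topological decompositions. That vanishing hinges on the equality $T_{\hat{x}}[f = 0] = Ker Df(\hat{x})$, which in the infinite-dimensional setting is not automatic: only the inclusion $T_{\hat{x}}[f = 0] \subset Ker Df(\hat{x})$ holds in general, and the reverse inclusion is precisely what the rank-theorem hypothesis $(*)$ buys us through Proposition \ref{prop33}. Thus the main obstacle in this proof is already absorbed into the preparatory work of Section 3, and here the argument reduces to assembling those consequences of the rank theorem with a one-line Fermat argument along tangent curves.
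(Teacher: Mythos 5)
Your proposal is correct, and its first half coincides with the paper's: both arguments rest on Proposition \ref{prop33} (that the rank-theorem hypotheses force $T_{\hat{x}}[f=0]=Ker\,Df(\hat{x})$, the reverse inclusion being the nontrivial part) together with the Fermat-along-tangent-curves observation that $DJ(\hat{x})$ must vanish on that tangent space. Where you diverge is the final step. The paper passes from $DJ(\hat{x})\in (Ker\,Df(\hat{x}))^{\perp}$ to the existence of $\lambda$ by quoting the closed range theorem (Theorem 2.19 in Brezis): since $Im\,Df(\hat{x})$ is closed, $(Ker\,Df(\hat{x}))^{\perp}=Im\,(Df(\hat{x}))^{*}$, and $\lambda$ is any preimage of $DJ(\hat{x})$ under the adjoint. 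You instead build $\lambda$ explicitly as $DJ(\hat{x})\circ\iota_1\circ T^{-1}\circ\pi_1$, using that $Df(\hat{x})|_{E_1}:E_1\to F_1$ is a continuous bijection of Banach spaces, hence an isomorphism by the Banach isomorphism theorem, and that $F_1$ admits a continuous projection $\pi_1$ from $Y$. Your route is more constructive and avoids duality theory entirely, at the price of using the full strength of hypothesis (b) (topological complementation of both $E_2$ and $F_1$) in this step, whereas the paper's duality argument needs only closedness of the image there; since (b) is assumed anyway, both are legitimate, and your version has the mild bonus of exhibiting a concrete multiplier, namely the one vanishing on $F_2$.
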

\begin{proof}
Using Theorem \ref{th41}, we know that the set $[f = 0]$ is a $C^1$ manifold around $\hat{x}$, and since $J$ is of class $C^1$ at $\hat{x}$, the first-order necessary optimality condition is $DJ(\hat{x})h = 0$ when $h \in T_{\hat{x}} [f = 0]$. Using Proposition \ref{prop33}, we know that $T_{\hat{x}} [f = 0] = Ker Df(\hat{x})$, hence  we obtain $DJ(\hat{x})h = 0$ when $h \in KerDf(\hat{x})$, i.e., $DJ(\hat{x}) \in (KerDf(\hat{x}))^{\perp}$.
Since $ImDf(\hat{x})$ is closed, using Theorem 2.19 in \cite{Br} (p. 46), we have $(KerDf(\hat{x}))^{\perp} = ImDf(\hat{x})^*$, and then we have $DJ(\hat{x}) \in Im(Df(\hat{x}))^*$, hence there exists $\lambda \in Y^*$ such that $DJ(\hat{x}) = Df(\hat{x})^*\lambda = \lambda \circ Df(\hat{x})$.
\end{proof}
Notice that in Theorem \ref{th41} we have not a multiplier before the differential of the criterion. Such a result in finite-dimensional spaces is proven in \cite{Ja} (Proposition 2.3).
\section{On spaces of bounded sequences}
In this section we establish several results on the space of the bounded sequences to prepare the using of the theorem on the Lagrange multipliers of the previous section in such sequence spaces.
\vskip1mm
\noindent
First we recall a result on the characterization of the closedness of linear operators.
\begin{proposition}\label{prop51} Let $\mathcal{X}$ and $\mathcal{Y}$ be two real Banach spaces and $L \in \mathcal{L}(\mathcal{X}, \mathcal{Y})$. The two following assertions are equivalent.
\begin{itemize}
\item[(i)] $Im L$ is closed in $\mathcal{Y}$.
\item[(ii)] $\exists c(L) > 0, \forall y \in Im L, \exists x_y \in \mathcal{X}$ s.t. $L(x_y) = y$ and $\Vert x_y \Vert \leq c(L) \Vert y \Vert$.
\end{itemize}
\end{proposition}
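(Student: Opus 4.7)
The plan is to prove the equivalence by the standard route via the Open Mapping Theorem applied to the canonical factorization $\widetilde{L}\colon \mathcal{X}/\ker L \to \operatorname{Im} L$, combined with a telescoping-series argument for the converse.

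For the implication (i) $\Rightarrow$ (ii), I would note that $\ker L$ is closed (since $L$ is continuous), so $\mathcal{X}/\ker L$ is a Banach space, and by assumption $\operatorname{Im} L$ is a closed subspace of $\mathcal{Y}$, hence itself a Banach space. The induced map $\widetilde{L}([x]) := L(x)$ is a continuous linear bijection from $\mathcal{X}/\ker L$ onto $\operatorname{Im} L$, so the Open Mapping (Banach isomorphism) Theorem gives a constant $c > 0$ such that $\|[x]\|_{\mathcal{X}/\ker L} \leq c \, \|L(x)\|$ for every $x \in \mathcal{X}$. Given $y \in \operatorname{Im} L$, pick any $x$ with $L(x) = y$; then $\inf_{k \in \ker L}\|x - k\| \leq c\|y\|$, and by definition of the infimum there exists $k \in \ker L$ with $\|x - k\| \leq c\|y\| + \|y\|$. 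Setting $x_y := x - k$ we have $L(x_y) = y$ and $\|x_y\| \leq (c+1)\|y\|$, so $c(L) := c+1$ works.

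For (ii) $\Rightarrow$ (i), I would take a sequence $(y_n) \subset \operatorname{Im} L$ converging to $y \in \mathcal{Y}$ and show $y \in \operatorname{Im} L$. Pass to a subsequence (still called $(y_n)$) such that $\|y_{n+1} - y_n\| \leq 2^{-n}$. Using (ii) applied to each difference $y_{n+1} - y_n \in \operatorname{Im} L$, choose $z_n \in \mathcal{X}$ with $L(z_n) = y_{n+1} - y_n$ and $\|z_n\| \leq c(L)\, 2^{-n}$. Also choose $u_1 \in \mathcal{X}$ with $L(u_1) = y_1$, and set $u_n := u_1 + z_1 + \cdots + z_{n-1}$. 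Then $L(u_n) = y_n$ by telescoping, and $(u_n)$ is Cauchy since $\sum \|z_n\| < +\infty$. By completeness $u_n \to u \in \mathcal{X}$, and continuity of $L$ yields $L(u) = y$, so $y \in \operatorname{Im} L$ and $\operatorname{Im} L$ is closed.

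The only subtle point is the factor of $c+1$ rather than $c$ in the first implication, which comes from the fact that the infimum defining the quotient norm need not be attained; this is handled cheaply by allowing an additive slack of $\|y\|$ in the choice of $k \in \ker L$. No other step presents real difficulty: both directions reduce to a one-line invocation of standard Banach-space machinery (open mapping in one direction, absolute summability plus completeness in the other).
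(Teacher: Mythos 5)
Your proof is correct. Note that the paper does not actually prove this proposition; it only points to two external references (Lemma 3.4 of the Bachir--Blot paper and Lemma 2.1 of the Blot--Cieutat paper), so there is no in-paper argument to compare against. Your two directions are the standard ones: the Open Mapping Theorem applied to the induced bijection $\mathcal{X}/\ker L \to \operatorname{Im} L$ for (i) $\Rightarrow$ (ii), and the absolutely convergent telescoping series for (ii) $\Rightarrow$ (i); both are sound, and your handling of the non-attained quotient-norm infimum via the additive slack $\Vert y\Vert$ is fine (the degenerate case $y=0$ is covered by taking $x_y=0$). Your quotient-space viewpoint is also consistent with the paper's Remark 5.2, which identifies $c(L)$ with $\Vert (ab\, L)^{-1}\Vert_{\mathcal{L}}$ when $\ker L$ is topologically complemented.
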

We can find two different proofs of this result in \cite{BB} (Lemma 3.4) and in \cite{BC} (Lemma 2.1).
\begin{remark}\label{rem52} About the constant $c(L)$, under (i), when in addition we assume that $Ker L$ is topologically complemented in $\mathcal{X}$, i.e. $\mathcal{X} = Ker L \oplus \mathcal{X}_1$, where $ \mathcal{X}_1$ is a closed vector subspace of $\mathcal{X}$, we can consider the abridgement $ab L : \mathcal{X}_1 \rightarrow Im L$, $ab L(x) := L(x)$, which is an isomorphism of Banach spaces, and we have $c(L) = \Vert (ab L)^{-1} \Vert_{\mathcal{L}}$.
\end{remark}
Let $X$ and $Y$ be real Banach spaces. Let $(T_t)_{t \in \N_*} \in \mathcal{L}(X,Y)^{\N_*}$. We consider the three following conditions on this family of linear operators.
\begin{itemize}
\item[(C1)] $\sup_{t \in \N_*} \Vert T_t \Vert_{\mathcal{L}} < + \infty$.
\item[(C2)] For all $t \in \N_*$, $Ker T_t$ is topologically complemented in $X$, i.e. there exists a closed vector subspace of $X$, say $S_t$, such that $X = Ker T_t \oplus S_t$, and $Im T_t$ is closed and topologocally complemented in $Y$, i.e. there exists a closed vector subspace of $Y$, say $W_t$, such that $Y = Im T_t \oplus W_t$.
\item[(C3)] $\hat{c} := \sup_{t \in \N_*} c(T_t) < + \infty$.
\end{itemize}
Notice that (C2) is automatically fulfilled when $dim X < + \infty$ and $dim Y < + \infty$.\\
We associate to this family the four following sequence spaces.
$$\mathcal{K} := \{ \underline{x} \in {\ell}^{\infty}(\N_*, X) : \forall t \in \N_*, x_t \in Ker T_t \}.$$
$$\mathcal{S} := \{ \underline{x} \in {\ell}^{\infty}(\N_*, X) : \forall t \in \N_*, x_t \in S_t \}.$$
$$\mathcal{I} := \{ \underline{y} \in {\ell}^{\infty}(\N_*, Y) : \forall t \in \N_*, y_t \in Im T_t \}.$$
$$\mathcal{W} := \{ \underline{y} \in {\ell}^{\infty}(\N_*, Y) : \forall t \in \N_*, y_t \in W_t \}.$$
The condition (C1), which is equivalent to the boundeness of $(T_t(x_t))_{t \in \N_*}$ when $\underline{x}$ is bounded after the Banach-Steinhaus theorem, permits us to define the operator 
\begin{equation}\label{eqn51}
\mathcal{T} : {\ell}^{\infty}(\N_*,X) \rightarrow {\ell}^{\infty}(\N_*, Y), \hskip3mm \mathcal{T}(\underline{x}) := (T_y(x_t))_{t \in \N_*}.
\end{equation}
\begin{lemma}\label{lem53}
Under (C1, C2, C3) the following assertions hold.
\begin{itemize}
\item[(i)] ${\ell}^{\infty}(\N_*,X) = Ker \mathcal{T} \oplus \mathcal{S}$.
\item[(ii)] ${\ell}^{ \infty}(\N_*,Y) = Im \mathcal{T} \oplus \mathcal{W}$.
\end{itemize}
\end{lemma}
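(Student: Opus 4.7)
My plan is to produce explicit pointwise decompositions in both parts and combine (C1), (C3), Remark \ref{rem52}, and Proposition \ref{prop51} to obtain the uniform norm bounds needed for the resulting sequences to lie in the prescribed $\ell^\infty$ subspaces.

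For (i), given $\underline{x} \in \ell^\infty(\N_*,X)$, I would use the isomorphism $ab T_t : S_t \to Im T_t$ provided by Remark \ref{rem52} to define $s_t := (ab T_t)^{-1}(T_t(x_t)) \in S_t$ and $k_t := x_t - s_t$. Then $T_t(s_t) = T_t(x_t)$, so $k_t \in Ker T_t$. Since $\Vert (ab T_t)^{-1} \Vert_{\mathcal{L}} = c(T_t) \leq \hat{c}$ by (C3) and $M := \sup_s \Vert T_s \Vert_{\mathcal{L}} < \infty$ by (C1), the estimate $\Vert s_t \Vert \leq \hat{c} M \Vert x_t \Vert$ yields $\underline{s} \in \mathcal{S}$ with $\Vert \underline{s} \Vert_\infty \leq \hat{c} M \Vert \underline{x} \Vert_\infty$, and $\underline{k} = \underline{x} - \underline{s} \in Ker \mathcal{T}$ is bounded as well. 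Uniqueness of the pair $(k_t, s_t)$ is pointwise, inherited from $X = Ker T_t \oplus S_t$, and the continuity of $\underline{x} \mapsto \underline{s}$ and $\underline{x} \mapsto \underline{k}$ just established promotes the algebraic splitting to a topological direct sum.

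For (ii), I would first identify $Im \mathcal{T}$ with $\mathcal{I}$: the inclusion $Im \mathcal{T} \subset \mathcal{I}$ is immediate, and conversely, given $\underline{i} \in \mathcal{I}$, Proposition \ref{prop51} together with (C3) supply, for each $t$, a lift $x_t$ of $i_t$ with $\Vert x_t \Vert \leq c(T_t)\Vert i_t \Vert \leq \hat{c} \Vert i_t \Vert$, producing $\underline{x} \in \ell^\infty(\N_*,X)$ with $\mathcal{T}(\underline{x}) = \underline{i}$. Then, given $\underline{y} \in \ell^\infty(\N_*,Y)$, I would use the pointwise splitting $Y = Im T_t \oplus W_t$ from (C2) to write $y_t = \pi_t(y_t) + \sigma_t(y_t)$ via the induced projections $\pi_t$ and $\sigma_t$, with uniqueness again pointwise.

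The hard part, as I see it, is the boundedness check in (ii): one must ensure that both $(\pi_t(y_t))_t$ and $(\sigma_t(y_t))_t$ remain in $\ell^\infty$, which reduces to a uniform estimate $\sup_t \Vert \pi_t \Vert_{\mathcal{L}} < \infty$ on the co-range projections. Unlike the matching bound in (i), which is powered by Remark \ref{rem52}, this estimate does not appear to follow mechanically from (C1)--(C3) alone and seems to require either a more careful reading of (C2) (constraining the choice of the complements $W_t$) or an additional uniform projection bound imposed by the concrete setting of Section 6. Once such a bound is in hand, the resulting algebraic decomposition of (ii) is automatically topological by continuity of the induced projections on $\ell^\infty(\N_*,Y)$, exactly as in (i).
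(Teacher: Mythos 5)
Your treatment of (i) is correct and in fact sharper than the paper's. The paper merely asserts that the algebraic splitting ${\ell}^{\infty}(\N_*,X) = \mathcal{K} \oplus^a \mathcal{S}$ is ``easy to verify'' and then invokes the Banach--Schauder theorem (closedness of $\mathcal{K}$ and $\mathcal{S}$ plus the algebraic splitting) to upgrade it to a topological direct sum; your formula $s_t = (ab\, T_t)^{-1}(T_t(x_t))$ exhibits the projection onto $\mathcal{S}$ explicitly and bounds its norm by $\hat{c}\,\sup_t \Vert T_t \Vert_{\mathcal{L}}$, which delivers the algebraic and the topological splitting in one stroke. Your identification $\mathcal{I} = Im\, \mathcal{T}$ via Proposition \ref{prop51} and (C3) is exactly the paper's argument.

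The difficulty you isolate in (ii) is genuine, and the paper does not resolve it: the step ${\ell}^{\infty}(\N_*,Y) = \mathcal{I} \oplus^a \mathcal{W}$ is also declared ``easy to verify'' there, yet it is precisely the point where a uniform bound $\sup_{t} \Vert P_t \Vert_{\mathcal{L}} < + \infty$ on the projections $P_t : Y \rightarrow Im\, T_t$ along $W_t$ is needed, and no such bound follows from (C1)--(C3). Indeed, (C3) controls the splitting of the domain (through $(ab\, T_t)^{-1}$, which is defined on $Im\, T_t$), not the splitting of the codomain, and the choice of the complements $W_t$ in (C2) is unconstrained. A concrete counterexample: take $X = Y = \R^2$ with the Euclidean norm, $T_t(u,v) := (u,0)$, $S_t := \R \times \{0\}$ and $W_t := \R\,(1, 1/t)$. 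Then (C1)--(C3) hold with $\Vert T_t \Vert_{\mathcal{L}} = c(T_t) = 1$ for all $t$, but the constant sequence $y_t := (0,1)$ decomposes pointwise as $(0,1) = (-t, 0) + t\,(1, 1/t)$, whose $Im\, T_t$-component is unbounded; hence $\underline{y} \notin \mathcal{I} + \mathcal{W}$ and assertion (ii) fails. So your proposal is not merely incomplete at this point: the lemma itself requires an additional hypothesis (uniform boundedness of the $P_t$, equivalently a uniformly controlled choice of the $W_t$), after which both your argument and the paper's appeal to Banach--Schauder go through.
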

\begin{proof}
For each $t \in \N_*$, we consider the mappings $\pi_t : {\ell}^{\infty}(\N_*, X) \rightarrow X$ defined by $\pi_t(\underline{x}) := x_t$, and the mapping $\varpi_t : {\ell}^{\infty}(\N_*, Y) \rightarrow Y$, defined by $\varpi_t(\underline{y}) := y_t$. Clearly we have $\pi_t \in \mathcal{L}({\ell}^{\infty}(\N_*, X), X)$ and $\varpi_t \in \mathcal{L}({\ell}^{\infty}(\N_*, Y), Y)$. Note that we have
$$\mathcal{K} = \bigcap_{t \in \N_*} \pi_t^{-1}(Ker T_t), \mathcal{S} = \bigcap_{t \in \N_*} \pi_t^{-1}(S_t), \mathcal{I} = \bigcap_{t \in \N_*} \varpi_t^{-1}(Im T_t), \mathcal{W} = \bigcap_{t \in \N_*}\varpi_t^{-1}(W_t).$$
Due to the continuity of the $\pi_t$ and $\varpi_t$, these four spaces are intersections of closed subsets which implies that $\mathcal{K}$ and $\mathcal{S}$ are closed vector subspaces of ${\ell}^{\infty}(\N_*,X)$ and $\mathcal{I}$ and $\mathcal{W}$ are closed vector subspaces of ${\ell}^{\infty}(\N_*,Y)$.
\vskip1mm
It is easy to verify that
$${\ell}^{\infty}(\N_*X) = \mathcal{K} \oplus^a \mathcal{S}, \hskip3mm {\ell}^{\infty}(\N_*, Y) = \mathcal{I} \oplus^a \mathcal{W}.$$
Since ${\ell}^{\infty}(\N_*, X)$ and ${\ell}^{\infty}(\N_*,Y)$ are Banach spaces and since $\mathcal{K}$, $\mathcal{S}$, $\mathcal{I}$, $\mathcal{W}$ are closed vector subspaces, from the Inverse Mapping Theorem of Banach-Schauder, \cite{La} (Corollary 1.5, p. 388), we obtain
\begin{equation}\label{eq52}
{\ell}^{\infty}(\N_*, X) = \mathcal{K} \oplus \mathcal{S}, \hskip3mm {\ell}^{\infty}(\N_*,Y) = \mathcal{I} \oplus \mathcal{W}.
\end{equation}
We easily verify that $\mathcal{K} = Ker \mathcal{T}$, and from (\ref{eq52}) we obtain (i).
\vskip1mm
\noindent
About the image, we see that $Im \mathcal{T} \subset \mathcal{I}$. if $\underline{y} \in \mathcal{I}$ then, for all $t \in \N_*$, we have $y_t \in Im T_t$. From (C2), using Proposition \ref{prop51}, we can say that there exists $x_{t, y_t} \in X$ such that $T_t(x_{t,y_t}) = y_t$ and $\Vert x_{t,y_t} \Vert \leq c(T_t) \Vert y_t \Vert$. Hence, setting $\underline{x}_{\underline{y}} := (x_{t,y_t})_{t \in \N_*}$, we have $\mathcal{T}(\underline{x}_{\underline{y}}) = \underline{y}$ and $\Vert \underline{x}_{\underline{y}} \Vert_{\infty} \leq \hat{c} \Vert \underline{y} \Vert$, therefore $\underline{x}_{\underline{y}} \in {\ell}(\N_*, X)$ and $\mathcal{T}(\underline{x}_{\underline{y}}) = \underline{y}$, i.e. $\underline{y} \in Im \mathcal{T}$. We have proven that $\mathcal{I} \subset Im \mathcal{T}$, and consequently $\mathcal{I} = Im \mathcal{T}$
 From (\ref{eq52}) we obtain (ii).
\end{proof}
\begin{lemma}\label{lem54} Let $A \in \mathcal{O}(X)$. Then the following assertions hold.
\begin{enumerate}
\item[(i)]
$Int {\ell}^{\infty}(\N_*, A) = \{ \underline{x} \in {\ell}^{\infty}(\N_*, A) : \inf_{t \in \N_*} d(x_t, A^c) > 0 \}$, 
where\\
 $d(x_t, A^c) := \inf \{ \Vert x_t - z \Vert : z \in A^c \}$.
\item[(ii)] Let $\underline{x} \in Int {\ell}^{\infty}(\N_*, A)$. We set $r := \inf_{ t \in \N_*} d(x_t, A^c) > 0$. Then we have $\prod_{t \in \N_*} B(x_t,\frac{r}{2}) \subset Int{\ell}^{\infty}(\N_*,A)$.
\end{enumerate}
\end{lemma}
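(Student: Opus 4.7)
For part (i), the strategy is to show both inclusions using only the definition of the sup-norm and the following elementary observation: if $\underline{x}=(x_t)_{t\in\N_*}\in\ell^\infty(\N_*,A)$, $t_0\in\N_*$ and $z\in X$, then the sequence $\underline{u}$ defined by $u_{t_0}:=z$ and $u_t:=x_t$ for $t\ne t_0$ belongs to $\ell^\infty(\N_*,X)$ and satisfies $\|\underline{u}-\underline{x}\|_\infty=\|z-x_{t_0}\|$. To prove the inclusion $\subset$, suppose $\underline{x}\in\mathrm{Int}\,\ell^\infty(\N_*,A)$, choose $\rho>0$ with $B_\infty(\underline{x},\rho)\subset\ell^\infty(\N_*,A)$, and apply the observation to any $t_0$ and any $z\in B(x_{t_0},\rho)$: the perturbed sequence lies in $\ell^\infty(\N_*,A)$, so $z\in A$. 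Hence $B(x_{t_0},\rho)\subset A$, i.e.\ $d(x_{t_0},A^c)\geq\rho$ for every $t_0$, giving $\inf_{t\in\N_*}d(x_t,A^c)\geq\rho>0$. Conversely, if $r:=\inf_{t\in\N_*}d(x_t,A^c)>0$ and $\underline{u}\in B_\infty(\underline{x},r)$, then for each $t$, $\|u_t-x_t\|<r\leq d(x_t,A^c)$ forces $u_t\notin A^c$; so $B_\infty(\underline{x},r)\subset\ell^\infty(\N_*,A)$, which places $\underline{x}$ in the interior.

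For part (ii), let $\underline{u}=(u_t)_{t\in\N_*}$ with $u_t\in B(x_t,r/2)$ for every $t$. First I would check that $\underline{u}\in\ell^\infty(\N_*,A)$: boundedness follows from $\|u_t\|\leq\|x_t\|+r/2\leq\|\underline{x}\|_\infty+r/2$, and $u_t\in A$ because $\|u_t-x_t\|<r/2<r\leq d(x_t,A^c)$. Next I would estimate, for every $t\in\N_*$, using the reverse triangle inequality for the distance to a set,
\[
d(u_t,A^c)\;\geq\;d(x_t,A^c)-\|u_t-x_t\|\;>\;r-\tfrac{r}{2}\;=\;\tfrac{r}{2}.
\]
Therefore $\inf_{t\in\N_*}d(u_t,A^c)\geq r/2>0$, and part (i) applied to $\underline{u}$ yields $\underline{u}\in\mathrm{Int}\,\ell^\infty(\N_*,A)$.

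There is no real obstacle; the only subtlety worth stating explicitly is the use of the ``single-coordinate perturbation'' trick in the $\subset$ direction of (i), which is what transfers the topological information about the whole sequence space back to each individual coordinate and forces a uniform lower bound on the distances $d(x_t,A^c)$.
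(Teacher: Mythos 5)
Your proof is correct. For part (ii) it is essentially the paper's own argument: both use the triangle inequality to get $d(u_t,A^c)\geq r/2$ for every $t$ and then invoke part (i); you state it as the reverse triangle inequality for the distance function, the paper unwinds it explicitly, but the content is identical. The only real difference concerns part (i): the paper does not prove it at all, citing Lemma A.1.1 of Blot--Crettez instead, whereas you supply a complete, self-contained proof. Your single-coordinate perturbation trick for the inclusion $\mathrm{Int}\,\ell^{\infty}(\N_*,A)\subset\{\underline{x}:\inf_t d(x_t,A^c)>0\}$ is exactly the right mechanism for transferring openness in the sequence space to a uniform lower bound coordinate by coordinate, and the converse inclusion is handled correctly (noting that $\|u_t-x_t\|<r\leq d(x_t,A^c)$ forces $u_t\notin A^c$). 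So your write-up is, if anything, more complete than the paper's; no gaps.
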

\begin{proof}
Assertion (i) is proven in \cite{BCr} (Lemma A.1.1). To prove (ii),\\
 if $\underline{u} \in \prod_{t \in \N} B(x_t,\frac{r}{2})$, then we have $\Vert u_t - x_t \Vert < \frac{r}{2}$, and for all $v \in A^c$, we have 
$$r < \Vert  x_t - v \Vert \leq \Vert x_t - u_t \Vert + \Vert u_t - v \Vert \leq  \frac{r}{2} + \Vert u_t - v \Vert \Longrightarrow r-  \frac{r}{2} \leq  \Vert u_t - v \Vert$$
which implies $d(u_t, A^c) \geq \frac{r}{2} > 0$, and so $\underline{u} \in Int{\ell}^{\infty}(\N_*, A)$.
\end{proof}
\begin{definition}\label{def55}
Let $A \in \mathcal{O}(X)$ and $g_t : A \rightarrow Y$ be a mapping for all $t \in \N_*$. The sequence $(g_t)_{t \in \N_*}$ is said uniformly equicontinuous on the bounded subsets of $A$ when
$$
\left\{
\begin{array}{l}
\forall B \in \B(A), \forall \epsilon > 0, \exists \eta_{B, \epsilon} > 0, \forall u, v \in B,\\
 \Vert u - v \Vert \leq \eta_{B, \epsilon} \Longrightarrow (\forall t \in \N_*, \Vert g_t(u) - g_t(v) \Vert \leq \epsilon).
\end{array}
\right.
$$
\end{definition}
\begin{proposition}\label{prop56} Let $A \in \mathcal{O}(X)$ and $g_t : A \rightarrow Y$ be a mapping for all $t \in \N_*$. We assume that the following condition is fulfilled: $\forall \underline{x} \in {\ell}^{\infty}(A), (g_t(x_t))_{t \in \N_*} \in {\ell}^{\infty}(\N_*, Y)$.\\
This condition permits to define the operator $G : {\ell}^{\infty}(\N_*, A) \rightarrow {\ell}^{\infty}(\N_*, Y)$ by setting $G(\underline{x}) := (g_t(x_t))_{t \in \N_*}$.\\
If, in addition we assume that $(g_t)_{t \in \N}$ is uniformly equicontinuous in the bounded subsets of $A$, then $G$ is continuous from $Int {\ell}^{\infty}(\N_*, A)$ into ${\ell}^{\infty}(\N_*, Y)$.
\end{proposition}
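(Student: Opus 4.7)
The plan is to establish continuity of $G$ pointwise. I fix an arbitrary $\underline{\hat{x}} \in Int\,{\ell}^{\infty}(\N_*,A)$ and aim to produce, for each $\epsilon > 0$, a $\delta > 0$ such that every $\underline{u} \in {\ell}^{\infty}(\N_*, X)$ with $\Vert \underline{u} - \underline{\hat{x}} \Vert_{\infty} < \delta$ lies in $Int\,{\ell}^{\infty}(\N_*, A)$ and satisfies $\Vert G(\underline{u}) - G(\underline{\hat{x}}) \Vert_{\infty} \leq \epsilon$. Lemma \ref{lem54}(i) supplies $r := \inf_{t \in \N_*} d(\hat{x}_t, A^c) > 0$, while Lemma \ref{lem54}(ii) guarantees $\prod_{t \in \N_*} B(\hat{x}_t, r/2) \subset Int\,{\ell}^{\infty}(\N_*,A)$, so every $\underline{u}$ with $\Vert \underline{u} - \underline{\hat{x}} \Vert_{\infty} < r/2$ automatically belongs to the domain of $G$.

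The decisive step is to exhibit a single bounded subset of $A$ containing all the relevant arguments of the $g_t$'s. I set $B := \{ u \in X : \exists t \in \N_*, \; \Vert u - \hat{x}_t \Vert \leq r/2 \}$. Since $r/2 < r \leq d(\hat{x}_t, A^c)$ for every $t$, each ball in the union is contained in $A$ and hence $B \subset A$; since $\underline{\hat{x}}$ is bounded, $B$ is norm-bounded by $\Vert \underline{\hat{x}} \Vert_{\infty} + r/2$; and clearly $\hat{x}_t \in B$ for every $t$. Given $\epsilon > 0$, the uniform equicontinuity hypothesis (Definition \ref{def55}) applied to this $B$ and this $\epsilon$ yields some $\eta_{B, \epsilon} > 0$.

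Finally I set $\delta := \min(\eta_{B, \epsilon}, r/2)$. For $\underline{u} \in {\ell}^{\infty}(\N_*, X)$ with $\Vert \underline{u} - \underline{\hat{x}} \Vert_{\infty} < \delta$, every $u_t$ satisfies $\Vert u_t - \hat{x}_t \Vert < r/2$, so $u_t \in B \subset A$ and $\underline{u} \in Int\,{\ell}^{\infty}(\N_*, A)$ by Lemma \ref{lem54}(ii); moreover $\Vert u_t - \hat{x}_t \Vert \leq \eta_{B, \epsilon}$, so uniform equicontinuity gives $\Vert g_t(u_t) - g_t(\hat{x}_t) \Vert \leq \epsilon$ for every $t$, and passing to the supremum yields $\Vert G(\underline{u}) - G(\underline{\hat{x}}) \Vert_{\infty} \leq \epsilon$. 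The only mildly delicate point, and hence the hardest step, is recognizing that the interior hypothesis on $\underline{\hat{x}}$ must be used twice: once through Lemma \ref{lem54}(ii) to guarantee that small perturbations remain in ${\ell}^{\infty}(\N_*, A)$, and once to manufacture a single bounded set $B \subset A$ that simultaneously contains all the perturbed arguments $u_t$ and on which a $t$-uniform modulus of continuity is available.
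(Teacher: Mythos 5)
Your proof is correct and follows essentially the same route as the paper: fix a point of the interior, use the interior hypothesis to produce a single bounded set $B\subset A$ containing all the coordinates of all sufficiently close sequences, apply the $t$-uniform modulus $\eta_{B,\epsilon}$ from Definition \ref{def55}, and pass to the supremum over $t$. The only cosmetic difference is that you extract the radius via Lemma \ref{lem54} and work with balls of radius $r/2$, while the paper takes $r$ directly from the definition of the interior; this changes nothing of substance.
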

\begin{proof}
We arbitrarily fix $\underline{x} \in Int  {\ell}^{\infty}(\N_*, A)$. Hence there exists $r > 0$ such that $B_{\infty}(\underline{x}, r) \subset {\ell}^{\infty}(\N_*, A)$. \\
The set $B := \{ u \in A : \exists t_u \in \N_*, \Vert u - x_{t_u} \Vert < r \}$ is bounded since, when $u \in B$, $\Vert u \Vert \leq \Vert u - x_{t_u} \Vert + \Vert x_{t_u} \Vert \leq r + \Vert \underline{x} \Vert_{\infty}
< + \infty$. We arbitrarily fix $\epsilon > 0$. Let $\underline{z} \in Int  {\ell}^{\infty}(\N_*, A)$ such that $\Vert \underline{z} - \underline{x} \Vert_{\infty} < \min \{ r, \eta_{B, \epsilon} \}$ where $\eta_{B, \epsilon}$ is provided by Definition \ref{def55}. Hence we have $z_t \in B$ since $\Vert z_t - x_t \Vert < r$ for all $t \in \N_*$, and we have $\Vert z_t - x_t \Vert < \eta_{B, \epsilon}$ which implies $\Vert g_t(z_t) - g_t(x_t) \Vert  < \epsilon$. Hence we have $\Vert G(\underline{z}) - G(\underline{x}) \Vert_{\infty} \leq \epsilon$. We have proven that $G$ is continuous at $\underline{x}$.
\end{proof}
\begin{proposition}\label{prop57} Let $A \in \mathcal{O}(X)$ and $(g_t)_{t \in \N_*} \in C^1(A,Y)^{\N_*}$. We assume that the following conditions are fulfilled.
\begin{enumerate}
\item[(a)] $\forall \underline{x} \in Int {\ell}^{\infty}(\N_*, A)$, $(g_t(x_t))_{t \in \N_*} \in {\ell}^{\infty}(\N_*, Y)$.
\item[(b)] $\forall \underline{x} \in Int {\ell}^{\infty}(\N_*, A)$, $(Dg_t(x_t))_{t \in \N_*} \in {\ell}^{\infty}(\N_*, \mathcal{L}(X,Y))$.
\item[(c)] $(Dg_t)_{t \in \N_*}$ is uniformly equicontinuous on the bounded subsets of $A$.
\end{enumerate}
We consider the operator $G :  Int {\ell}^{\infty}(\N_*, A) \rightarrow {\ell}^{\infty}(\N_*, Y)$ defined by $G(\underline{x}) := (g_t(x_t))_{t \in \N_*}$. \\
Then the following assertions hold for all $\underline{x} \in Int  {\ell}^{\infty}(\N_*, A)$.
\begin{enumerate}
\item[(i)] $G$ is Fr\'echet differentiable at $\underline{x}$, and, for all $\underline{v} \in {\ell}^{\infty}(\N_*, X)$, we have\\
 $DG(\underline{x}) \underline{v} = (D g_t(x_t) v_t)_{t \in \N_*}$.
\item[(ii)] $G \in C^1(Int {\ell}^{\infty}(\N_*, A) , {\ell}^{\infty}(\N_*, Y))$.
\end{enumerate}
\end{proposition}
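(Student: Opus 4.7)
The plan is to verify that the natural candidate for the derivative, namely $L\underline{v} := (Dg_t(x_t)v_t)_{t \in \N_*}$, is a bounded linear operator from $\ell^{\infty}(\N_*,X)$ into $\ell^{\infty}(\N_*,Y)$, and then to control the remainder by a standard mean value estimate combined with the uniform equicontinuity hypothesis (c). First I would fix $\underline{x} \in \mathrm{Int}\,\ell^{\infty}(\N_*,A)$, set $r := \inf_{t \in \N_*} d(x_t,A^c)/2 > 0$ (using Lemma \ref{lem54}), and introduce the bounded set $B := \bigcup_{t \in \N_*} \overline{B}(x_t,r) \subset A$ (bounded because $\|u\| \leq \|\underline{x}\|_\infty + r$). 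The boundedness of $L$ follows immediately from (b): $\|L\underline{v}\|_\infty \leq \bigl(\sup_{t} \|Dg_t(x_t)\|_{\mathcal{L}}\bigr)\cdot \|\underline{v}\|_\infty$.

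For differentiability, I would use the mean value identity
\[
g_t(x_t+v_t) - g_t(x_t) - Dg_t(x_t)v_t = \int_0^1 \bigl[Dg_t(x_t+sv_t)-Dg_t(x_t)\bigr]\,v_t\, ds,
\]
valid as long as the segment $[x_t, x_t+v_t]$ lies in $A$, which is ensured when $\|\underline{v}\|_\infty < r$. Given $\epsilon > 0$, apply (c) to the bounded set $B$ to get $\eta_{B,\epsilon} > 0$, and restrict to $\|\underline{v}\|_\infty < \min\{r, \eta_{B,\epsilon}\}$. Then every point $x_t + sv_t$ lies in $B$ and is within $\eta_{B,\epsilon}$ of $x_t$, so $\|Dg_t(x_t+sv_t)-Dg_t(x_t)\|_{\mathcal{L}} \leq \epsilon$ uniformly in $t$ and $s$. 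Taking the supremum in $t$ yields
\[
\|G(\underline{x}+\underline{v}) - G(\underline{x}) - L\underline{v}\|_\infty \leq \epsilon\,\|\underline{v}\|_\infty,
\]
which proves (i).

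For (ii), I would note that assumption (b) ensures $\underline{x} \mapsto (Dg_t(x_t))_{t \in \N_*}$ maps $\mathrm{Int}\,\ell^{\infty}(\N_*,A)$ into $\ell^{\infty}(\N_*, \mathcal{L}(X,Y))$, and (c) is exactly the uniform equicontinuity required by Proposition \ref{prop56} with $Y$ replaced by $\mathcal{L}(X,Y)$. Hence that proposition applies to the sequence $(Dg_t)_{t \in \N_*}$ and yields continuity of $\underline{x} \mapsto (Dg_t(x_t))_{t \in \N_*}$. Composing with the obvious contractive linear embedding $\ell^{\infty}(\N_*, \mathcal{L}(X,Y)) \to \mathcal{L}(\ell^{\infty}(\N_*,X),\ell^{\infty}(\N_*,Y))$ sending $(T_t) \mapsto [\underline{v}\mapsto (T_t v_t)]$ (whose operator norm is bounded by the $\ell^\infty$-sup of the $\|T_t\|_{\mathcal{L}}$), I get continuity of $DG$, proving (ii).

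The main obstacle is the uniformity in $t$ in the remainder estimate: a pointwise-in-$t$ Fréchet expansion for each $g_t$ is immediate from $g_t \in C^1$, but to get convergence in the $\ell^\infty$-norm one must upgrade this to an estimate that is uniform over all coordinates simultaneously. This is exactly what the uniform equicontinuity assumption (c) is designed to provide, once one has checked that all the line segments $[x_t, x_t+v_t]$ lie in a common bounded subset of $A$; the construction of the bounded set $B$ above, relying on Lemma \ref{lem54}, is the key bookkeeping step.
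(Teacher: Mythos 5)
Your proposal is correct and follows essentially the same route as the paper: construct a common bounded set containing all the segments $[x_t,x_t+v_t]$, apply the uniform equicontinuity of $(Dg_t)$ on that set, and control the remainder by a mean value estimate (the paper cites the Mean Value Inequality where you use its integral form, which is equivalent here), then deduce (ii) by applying Proposition \ref{prop56} to the sequence of differentials. If anything, you are slightly more explicit than the paper on one point: the paper simply says ``apply Proposition \ref{prop56} to $DG$,'' whereas you correctly note that this proposition gives continuity of $\underline{x}\mapsto (Dg_t(x_t))_t$ into ${\ell}^{\infty}(\N_*,\mathcal{L}(X,Y))$ and that one must still compose with the contractive linear embedding into $\mathcal{L}({\ell}^{\infty}(\N_*,X),{\ell}^{\infty}(\N_*,Y))$ to conclude.
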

\begin{proof}  We arbitrarily fix $\underline{x} \in  Int {\ell}^{\infty}(\N_*, A)$. Using (b), the linear operator $\mathcal{T} : {\ell}^{\infty}(\N_*,X) \rightarrow {\ell}^{\infty}(\N_*Y)$, defined by $\mathcal{T} \underline{h} := (Dg_t(x_t) h_t)_{t \in _*\N}$, is well-defined and continuous.\\
We can use Lemma \ref{lem54}(ii) and assert that $\prod_{t \in \N_*} B(x_t, \frac{r}{2}) \subset  Int {\ell}^{\infty}(\N_*, A)$. Using the set $B := \{ u \in A : \exists t_u \in \N_*, \Vert u - x_t \Vert < r\}$, we have yet seen that $B$ is bounded in $A$ and we have $B(x_t, \frac{r}{2}) \subset B$.\\
We arbitrarily fix $\epsilon > 0$, and, using assumption (c),  we consider $\eta_{B, \epsilon} > 0$ provided by Definition \ref{def55} for the sequence $(Dg_t)_{t \in \N_*}$.\\
We arbitrarily fix $\underline{h} \in{\ell}^{\infty}(\N_*, X)$ such that $\Vert \underline{h} \Vert_{\infty} < \min \{ \frac{r}{2}, \eta_{B, \epsilon} \}$. Hence, $\forall t \in \N_*$, $\forall z_t \in [x_t, x_t + h_t] \subset B(x_t, \frac{r}{2})$, we have $\Vert z_t - x_t \Vert < \eta_{B, \epsilon}$ which implies $\Vert Dg_t(z_t) - Dg_t(x_t) \Vert_{\mathcal{L}} < \epsilon$.\\
Now using the Mean Value Inequality as established in \cite{ATF} (Corollary 1, p. 144), we obtain, for all $t \in \N_*$, \\
$\Vert g_t(x_t + h_t) - g_t(x_t) - Dg_t(x_t)h_t \Vert \leq  \sup_{z_t \in [x_t, x_t + h_t]} \Vert Dg_t(z_t) - Dg_t(x_t) \Vert_{\mathcal{L}} \Vert h_t \Vert \leq \epsilon \Vert h_t \Vert$, and taking the sup on the $t \in \N_*$, we obtain 
$$\Vert G(\underline{x} + \underline{h} - G(\underline{x}) - \mathcal{T} \underline{h} \Vert_{\infty} \leq \epsilon \Vert \underline{h} \Vert_{\infty}.$$
Hence we have proven that $G$ is Fr\'echet differentiable at $\underline{x}$ and that $DG(\underline{x}) \underline{h} = (Dg_t(x_t)h_t)_{t \in \N_*}$. \\
Applying Proposition \ref{prop56} to $DG$, we obtain the continuity of $DG$.
\end{proof}
\begin{proposition}\label{prop58} Let $A \in \mathcal{O}(X)$ and $(g_t)_{t \in \N_*} \in C^1(A,Y)^{\N_*}$. We assume that the conditions (a), (b), (c) of Proposition \ref{prop57} are fulfilled. 
Let $\hat{\underline{x}} = (\hat{x}_t)_{t \in \N_*} \in Int {\ell}^{\infty}(\N_*, A)$. We assume that the following conditions are fulfilled.
\begin{itemize}
\item[(d)] $\sup_{t \in \N_*} \Vert Dg(\hat{x}_t) \Vert_{\mathcal{L}} < + \infty$.
\item[(e)] For all $t \in \N_*$ there exist a closed vector subspace of $X$, say $S_t$, and a closed vector subspace of $Y$, say $W_t$, such that $X = Ker Dg(\hat{x}_t) \oplus S_t$ and $Y = Im Dg(\hat{x}_t) \oplus W_t$.
\item[(f)] $\hat{c} := \sup_{t \in \N_*} c(Dg(\hat{x}_t)) < + \infty$.
\end{itemize}
Then we have ${\ell}^{\infty}(\N_*,X ) = Ker DG(\hat{\underline{x}}) \oplus \mathcal{S}$ and ${\ell}^{\infty}(\N_*, Y) = Im DG(\hat{\underline{x}}) \oplus \mathcal{W}$, with $\mathcal{S} := \{ \underline{v} \in {\ell}^{\infty}(\N_*, X) : \forall t \in \N_*, v_t \in S_t \}$ and $\mathcal{W} := \{ \underline{w} \in {\ell}^{\infty}(\N_*,Y) : \forall t \in \N_*, w_t \in W_t \}$.
\end{proposition}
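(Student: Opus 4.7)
The plan is to reduce this statement to a direct application of Lemma \ref{lem53} to the family $T_t := Dg_t(\hat{x}_t)$, after identifying $DG(\hat{\underline{x}})$ with the operator $\mathcal{T}$ of \eqref{eqn51}.

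First, I would invoke Proposition \ref{prop57}: since conditions (a), (b), (c) are assumed, $G$ is Fr\'echet differentiable at $\hat{\underline{x}}$ with $DG(\hat{\underline{x}})\underline{h} = (Dg_t(\hat{x}_t) h_t)_{t \in \N_*}$ for every $\underline{h} \in \ell^{\infty}(\N_*, X)$. This is exactly the operator $\mathcal{T}$ of \eqref{eqn51} attached to $T_t := Dg_t(\hat{x}_t)$, so $Ker\, DG(\hat{\underline{x}}) = Ker\, \mathcal{T}$ and $Im\, DG(\hat{\underline{x}}) = Im\, \mathcal{T}$, while the spaces $\mathcal{S}$ and $\mathcal{W}$ defined in the statement of Proposition \ref{prop58} coincide verbatim with those introduced just before Lemma \ref{lem53} for this choice of $(T_t)$.

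Next I would verify that the family $(Dg_t(\hat{x}_t))_{t \in \N_*}$ satisfies conditions (C1), (C2), (C3) of Section 5. Hypothesis (d) is exactly (C1), and hypothesis (f) is exactly (C3). For (C2), hypothesis (e) directly provides the topological complements $S_t$ of $Ker\, Dg_t(\hat{x}_t)$ in $X$ and $W_t$ of $Im\, Dg_t(\hat{x}_t)$ in $Y$; the only point not explicitly asserted in (e) is the closedness of $Im\, Dg_t(\hat{x}_t)$, but this is automatic, since whenever $Y = M \oplus N$ is a topological direct sum of two vector subspaces, each summand equals the kernel of the continuous projection onto the other and is therefore closed.

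With (C1), (C2), (C3) verified for $T_t = Dg_t(\hat{x}_t)$, Lemma \ref{lem53}(i)--(ii) applied to $\mathcal{T} = DG(\hat{\underline{x}})$ yields the two announced decompositions $\ell^{\infty}(\N_*, X) = Ker\, DG(\hat{\underline{x}}) \oplus \mathcal{S}$ and $\ell^{\infty}(\N_*, Y) = Im\, DG(\hat{\underline{x}}) \oplus \mathcal{W}$. I do not anticipate a genuine obstacle: the substantive analytic work (Banach--Schauder argument, the lifting by the constants $c(T_t)$) has been absorbed into Lemma \ref{lem53}, and the identification of $DG(\hat{\underline{x}})$ with $\mathcal{T}$ has been absorbed into Proposition \ref{prop57}; what remains is essentially the bookkeeping that conditions (d), (e), (f) translate into (C1), (C2), (C3).
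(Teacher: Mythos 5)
Your proposal is correct and follows exactly the paper's route: the paper's own proof is the one-line observation that the result follows from Proposition \ref{prop57} (identifying $DG(\hat{\underline{x}})$ with $\mathcal{T}$) and Lemma \ref{lem53} with $T_t = Dg_t(\hat{x}_t)$, with (d), (e), (f) playing the roles of (C1), (C2), (C3). Your added remark that closedness of $Im\, Dg_t(\hat{x}_t)$ is automatic from the topological direct sum is a useful detail the paper leaves implicit.
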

\begin{proof} This result is a consequence of Proposition \ref{prop57} and of Lemma \ref{lem53} with $T_t = Dg(\hat{x}_t)$.
\end{proof}
\noindent
Notice that (e) is automatically fulfilled when $dim X < + \infty$ and $dim Y < + \infty$.

\section{A variational problem}
In this section we consider a maximization  problem in infinite horizon and in discrete time under holonomic constraints. The unknown variable is a bounded sequence with values in a real Banach space $X$. Using the results of the previous sections we obtain a first-order necessary optimality condition in the form of a nonhomogeneous Euler-Lagrange equation.
\vskip1mm
\noindent
Let $A$ be a nonempty subset of $X$. For all $t \in \N$, let 
$u_t : A \times A \rightarrow \R$ and for all $t \in \N_*$ let $g_t : A \rightarrow Y$ be functions, where $Y$ is a real Banach space. We fix a vector $\sigma \in A$, a real number $\beta \in (0,1)$ and we consider the following variational problem under holonomic constraints. 
\[
(\mathcal{V})
\left\{
\begin{array}{rl}
{\rm Maximize} & J(\underline{x}) := \sum_{t=0}^{+ \infty}\beta^t u_t(x_t, x_{t+1})\\
{\rm subject \; to} & \underline{x} \in {\ell}^{\infty}(\N, A), x_0 = \sigma\\
{\rm and}  & \forall t \in \N_*, g_t(x_t) = 0.
\end{array}
\right.
\]
The problems of Calculus of Variations or of Optimal Control in discrete time and in infinite horizon are very usual in Economics and in Management; see for instance \cite{BH}, \cite{BB}, \cite{BCr} and references therein.
\begin{theorem}\label{th61}
Let $\underline{\hat{x}}$ be a solution of $(\mathcal{V})$. We assume that the following conditions are fulfilled
\begin{enumerate}
\item[(A1)] $(\hat{x}_t, \hat{x}_{t+1})_{t \in \N} \in Int {\ell}^{\infty}(\N,A \times A)$.
\item[(A2)] $\forall t \in \N$, $u_t \in C^1(A \times A, \R)$.
\item[(A3)] $\forall (\underline{x}, \underline{y}) \in Int{\ell}^{\infty}(\N, A \times A)$, 
$(u_t(x_t,y_t))_{t \in \N} \in {\ell}_{\infty}(\N, \R)$.
\item[(A4)] $\forall (\underline{x}, \underline{y}) \in Int{\ell}^{\infty}(\N,A \times A)$, $(Du_t(x_t,y_t))_{t \in \N} \in {\ell}^{\infty}(\N, (X \times X)^*)$.
\item[(A5)] $(Du_t)_{t \in \N}$ is uniformly equicontinuous on the bounded subsets of $A \times A$.
\item[(A6)] $\underline{\hat{x}} \in Int{\ell}^{\infty}(\N, A)$.
\item[(A7)] $\forall t \in \N_*$, $g_t \in C^1(A, \R)$.
\item[(A8)] $\forall \underline{x} \in Int{\ell}^{\infty}(\N_*,A)$, $(g_t(x_t))_{t \in \N_*} \in {\ell}^{\infty}(\N_*, Y)$.
\item[(A9)] $\forall \underline{x} \in Int{\ell}^{\infty}(\N_*,A)$, $(Dg_t(x_t))_{t \in \N_*} \in {\ell}^{\infty}(\N_*, \mathcal{L}(X, Y))$.
\item[(A10)] $(Dg_t)_{t \in \N_*}$ is uniformly equicontinuous on the bounded subsets of $A$.
\item[(A11)] For all $t \in \N_*$, there exists a closed vector subspace of $X$, say $S_t$, such that $X = Ker Dg_t(\hat{x}_t) \oplus S_t$, and there exists a closed vector subspace of $Y$, say $W_t$, such that $Y = Im Dg_t(\hat{x}_t) \oplus W_t$.
\item[(A12)]  $sup_{t \in \N_*} c(Dg_t(\hat{x}_t) ) < + \infty$.
\item[(A13)] $\exists r > 0$, $\forall \underline{x} \in B_{\infty}(\underline{\hat{x}}, r)$,   $\forall t \in \N_*$,
$ Im Dg_t({x}_t) \cap W_t = \{ 0 \}$.
\end{enumerate}
Then there exists a sequence $(p_t)_{t \in \N_*} \in {\ell}^1({\N_*}, Y^*)$ which satisfies, for all $t \in \N_*$, the following equality
$$\beta^t D_1 u_t(\hat{x}_t, \hat{x}_{t+1}) + \beta^{t-1} D_2 u_{t-1}(\hat{x}_{t-1}, \hat{x}_{t})= p_t \circ D g_t(\hat{x}_t).$$
\end{theorem}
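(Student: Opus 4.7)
The plan is to reduce problem $(\mathcal{V})$ to an instance of the abstract problem $(M)$ in the Banach space $\ell^{\infty}(\N_*, X)$ by treating $x_0 = \sigma$ as fixed, then apply Theorem \ref{th41} to produce an abstract multiplier $\lambda \in (\ell^{\infty}(\N_*, Y))^*$, and finally localize in time to convert $\lambda$ into the sequence $(p_t)_{t \in \N_*}$. First I would set $G : Int\,\ell^{\infty}(\N_*, A) \to \ell^{\infty}(\N_*, Y)$ by $G(\underline{x}) := (g_t(x_t))_{t \in \N_*}$; assumptions (A7)--(A10) are exactly the hypotheses of Proposition \ref{prop57}, so $G$ is of class $C^1$ with $DG(\underline{x})\underline{v} = (Dg_t(x_t) v_t)_{t \in \N_*}$. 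For the criterion I would write $J$ as the composition of the affine continuous shift $\underline{x} \mapsto ((x_t, x_{t+1}))_{t \in \N}$ (using $x_0 = \sigma$) from $\ell^{\infty}(\N_*, X)$ into $\ell^{\infty}(\N, X \times X)$, of the superposition operator $(\underline{y}, \underline{z}) \mapsto (u_t(y_t, z_t))_{t \in \N}$, which is of class $C^1$ by (A2)--(A5) and Proposition \ref{prop57} applied to the product space $X \times X$ with target $\R$, and of the continuous linear form $(\alpha_t) \mapsto \sum_{t=0}^{+\infty} \beta^t \alpha_t$ on $\ell^{\infty}(\N, \R)$ (well-defined since $\beta \in (0,1)$). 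The chain rule, combined with (A1) to locate $\hat{\underline{x}}$ in the relevant interior, then yields the Fr\'echet differentiability of $J$ at $\hat{\underline{x}}$ together with
\[ DJ(\hat{\underline{x}}) \underline{v} = \sum_{t=1}^{+\infty} \bigl( \beta^t D_1 u_t(\hat{x}_t, \hat{x}_{t+1}) + \beta^{t-1} D_2 u_{t-1}(\hat{x}_{t-1}, \hat{x}_t) \bigr) v_t, \]
the $t = 0$ boundary term being killed by $v_0 = 0$.

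Next I would check the three hypotheses of Theorem \ref{th41} at the point $\hat{\underline{x}}$. Hypothesis (a) was just established. For (b), Proposition \ref{prop58}, fed by (A11) and (A12) (and with condition (d) of that proposition automatic from (A9)), supplies the topological splittings $\ell^{\infty}(\N_*, X) = Ker\, DG(\hat{\underline{x}}) \oplus \mathcal{S}$ and $\ell^{\infty}(\N_*, Y) = Im\, DG(\hat{\underline{x}}) \oplus \mathcal{W}$. For (c), the ball $B_{\infty}(\hat{\underline{x}}, r)$ provided by (A13) is the required neighborhood: given $\underline{x}$ in it and $\underline{y} \in Im\, DG(\underline{x}) \cap \mathcal{W}$, each component satisfies $y_t \in Im\, Dg_t(x_t) \cap W_t = \{0\}$, so $\underline{y} = 0$. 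Theorem \ref{th41} then delivers $\lambda \in (\ell^{\infty}(\N_*, Y))^*$ with $DJ(\hat{\underline{x}}) = \lambda \circ DG(\hat{\underline{x}})$.

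The last and most delicate step is to disintegrate $\lambda$ into the sequence $(p_t)$. For each $t \in \N_*$ and $v \in X$, let $\underline{v}^{(t)}$ denote the sequence whose unique nonzero entry is $v$ at index $t$; testing the Lagrange identity at $\underline{v}^{(t)}$ with $v \in Ker\, Dg_t(\hat{x}_t)$ shows that the functional $\varphi_t := \beta^t D_1 u_t(\hat{x}_t, \hat{x}_{t+1}) + \beta^{t-1} D_2 u_{t-1}(\hat{x}_{t-1}, \hat{x}_t) \in X^*$ annihilates $Ker\, Dg_t(\hat{x}_t)$. Using the splitting $X = Ker\, Dg_t(\hat{x}_t) \oplus S_t$ from (A11) and the isomorphism $ab\, Dg_t(\hat{x}_t) : S_t \to Im\, Dg_t(\hat{x}_t)$ (Proposition \ref{prop51} and Remark \ref{rem52}), define $q_t \in (Im\, Dg_t(\hat{x}_t))^*$ by $q_t(y) := \varphi_t\bigl( (ab\, Dg_t(\hat{x}_t))^{-1} y \bigr)$; then $\|q_t\| \leq c(Dg_t(\hat{x}_t))\,\|\varphi_t\| \leq \hat{c}\,\|\varphi_t\|$ by (A12). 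A Hahn--Banach extension $p_t \in Y^*$ of $q_t$ satisfies $\|p_t\| = \|q_t\|$ and $\varphi_t = p_t \circ Dg_t(\hat{x}_t)$ on $X$ (direct verification on each summand of the splitting), which is the claimed Euler--Lagrange equation. Summability $(p_t) \in \ell^1(\N_*, Y^*)$ then follows because (A4) and (A1) yield a uniform bound $M$ on $\|D_i u_t(\hat{x}_t, \hat{x}_{t+1})\|$, giving $\|p_t\| \leq \hat{c}\,M(1+\beta)\,\beta^{t-1}$, which is summable since $\beta < 1$. The main obstacle I anticipate is precisely this last disintegration: since $(\ell^{\infty})^*$ is strictly larger than $\ell^1(\N_*, Y^*)$, the abstract $\lambda$ need not a priori come from a sequence, and it is the uniform right-inverse bound $\hat{c}$ of (A12), combined with the geometric decay of $\varphi_t$ provided by the discount factor, that makes the componentwise factorization succeed.
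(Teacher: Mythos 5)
Your proposal is correct, but it reaches the conclusion by a genuinely different route from the paper, and the two are worth contrasting. The paper never applies Theorem \ref{th41} in the sequence space: for each $t$ it freezes all coordinates except $x_t$, observes that $\hat{x}_t$ solves the static problem $(\mathcal{R}_t)$ of maximizing $x_t \mapsto \beta^{t-1}u_{t-1}(\hat{x}_{t-1},x_t)+\beta^t u_t(x_t,\hat{x}_{t+1})$ subject to $g_t(x_t)=0$, applies Theorem \ref{th41} in $X$ and $Y$ separately for each $t$ to get $q_t\in Y^*$, and replaces $q_t$ by $p_t := q_t\circ(\text{projection onto } Im\,Dg_t(\hat{x}_t) \text{ along } W_t)$; the $\ell^1$-summability is then obtained indirectly, by using Proposition \ref{prop58} to write any $\underline{y}\in{\ell}^{\infty}(\N_*,Y)$ as $DG(\hat{\underline{x}})\underline{x}+\underline{w}$, deducing that $\sum_t p_t(y_t)$ converges for every $\underline{y}\in c_0(\N_*,Y)$, and invoking a characterization of ${\ell}^1(\N_*,Y^*)$ from Leonard's paper on Banach sequence spaces. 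You instead apply Theorem \ref{th41} once, globally, in ${\ell}^{\infty}(\N_*,X)$ and ${\ell}^{\infty}(\N_*,Y)$ (which is what Propositions \ref{prop57}, \ref{prop58} and (A13) indeed make legitimate), use the global multiplier only to extract the coordinatewise orthogonality $\varphi_t\in(Ker\,Dg_t(\hat{x}_t))^{\perp}$ --- the same intermediate fact the static problems deliver in the paper --- and then build $p_t$ explicitly through the abridged isomorphism and Hahn--Banach so as to get the quantitative bound $\Vert p_t\Vert\leq\hat{c}\,\Vert\varphi_t\Vert\leq\hat{c}M(1+\beta)\beta^{t-1}$. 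What each approach buys: the paper's per-$t$ decoupling avoids having to differentiate $J$ on the sequence space and uses hypotheses (A11)--(A13) only coordinatewise, at the price of the less transparent duality argument for summability; your global route costs the (routine but nontrivial) verification that $J$ is $C^1$ on ${\ell}^{\infty}$ via the shift, the superposition operator and the discounted summation functional, but it rewards you with a direct geometric-series proof of summability that makes the role of (A12) and of the discount factor $\beta$ completely explicit and dispenses with the external sequence-space duality result. Your anticipated ``obstacle'' (that $\lambda\in({\ell}^{\infty})^*$ need not come from an ${\ell}^1$ sequence) is real but, as you correctly observe, harmless, since $\lambda$ is only used qualitatively and the actual $p_t$ are constructed by hand.
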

\begin{proof} For each $t \in \N_*$ we consider the following static problem.
\[
(\mathcal{R}_t)
\left\{
\begin{array}{rl}
{\rm Maximize} & J_t(x_t) := \beta^{t-1} u_{t-1}(\hat{x}_{t-1}, x_t) + \beta^t u_t(x_t, \hat{x}_{t+1})\\
{\rm subject \; to} & x_t \in A, \hskip2mm g_t(x_t) = 0.
\end{array}
\right.
\]
Since $\underline{\hat{x}}$ is a solution of ($\mathcal{V}$), proceeding by contradiction, it is easy to see that $\hat{x}_t$ is a solution of ($\mathcal{R}_t$). Under our assumptions we can use Theorem \ref{th41} on
($\mathcal{R}_t$) and assert that there exists $q_t \in Y^*$ such the following equality holds.
\begin{equation}\label{eq61}
\beta^{t-1} D_2 u_{t-1}\hat{x}_{t-1}, \hat{x}_t) + \beta^t u_t(\hat{x}_t, \hat{x}_{t+1}) = q_t \circ Dg_t(\hat{x}_t).
\end{equation}
Under (A11), we know that, for every $y \in Y$, there exists a unique $(z,w) \in ImDg_t(\hat{x}_t) \times W_t$ such that $y = z + w$. We define $p_t \in Y^*$ by setting $p_t(y) := q_t(z)$. Hence we deduce from (\ref{eq61}) the following equality:
\begin{equation}\label{eq62}
\beta^{t-1} D_2 u_{t-1}(\hat{x}_{t-1}, \hat{x}_t) + \beta^t D_1u_t(\hat{x}_t, \hat{x}_{t+1}) = p_t \circ Dg_t(\hat{x}_t).
\end{equation}
Now we ought to prove that $(p_t)_{t \in \N_*} \in {\ell}_1(\N_*, Y^*)$.
\vskip1mm
\noindent
Let $\underline{y} \in {\ell}_{\infty}(\N_*, Y)$. Using Proposition \ref{prop58}, there exists $\underline{x} \in {\ell}_{\infty}(\N_*, X)$ and $\underline{w} \in \mathcal{W}$ such that $\underline{y} = DG(\underline{\hat{x}})\underline{x} + \underline{w}$, i.e. $y_t = Dg_t(\hat{x}_t) x_t + w_t$. Hence we have
\[
\begin{array}{ccl}
p_t(y_t) & = & p_t \circ Dg_t(\hat{x}_t) x_t + p_t(w_t)\\
\null & = & p_t \circ Dg_t(\hat{x}_t) x_t + 0\\
\null & = & \beta^{t-1} D_2 u_{t-1}(\hat{x}_{t-1}, \hat{x}_t)x_t + \beta^t D_1u_t(\hat{x}_t, \hat{x}_{t+1})x_t
\end{array}
\]
The sequences $(D_2 u_{t-1}(\hat{x}_{t-1}, \hat{x}_t))_{t \in \N_*}$ and $( D_1u_t(\hat{x}_t, \hat{x}_{t+1}))_{ t \in \N_*}$ are bounded after (A4), and since $(x_t)_{t \in \N_*}$ is bounded, the sequences 
$(D_2 u_{t-1}(\hat{x}_{t-1}, \hat{x}_t)x_t)_{t \in \N_*}$ and $( D_1u_t(\hat{x}_t, \hat{x}_{t+1})x_t)_{ t \in \N_*}$ are bounded.\\
 Since $\beta \in (0,1)$, the series $\sum_{ t\geq 1} \beta^{t-1} D_2 u_t(\hat{x}_{t-1}, \hat{x}_t)x_t$ and $\sum_{t \geq 0} \beta^t D_1u_t(\hat{x}_t, \hat{x}_{t+1})x_t$ are convergent in $\R$, and consequently the series $\sum_{t \geq 1} p_t(y_t)$ is convergent in 
$ \R$.  Since $c_0(\N_*, Y) \subset {\ell}_{\infty}(\N_*, Y)$, we obtain that the series $\sum_{t \geq 1} p_t(y_t)$ is convergentin $\R$  when $\underline{y} \in c_0(\N_*, Y)$. Using \cite{Le}(assertion ($\alpha$) in page 247), we can assert that $(p_t)_{ t \in \N_*} \in {\ell}_1(\N_*, Y^*)$.
\end{proof}
Notice that (A3), (A4), (A8), (A9) and (A11) are automatically fulfilled when $dim X < + \infty$ and $dim Y < + \infty$. When $dim Y < + \infty$, using a represention of the topological dual space of ${\ell}_{\infty}(\N_*, Y^*)$ (as a direct of ${\ell}_1(\N_*, Y^*)$ and of another subspace) which is given in \cite{AB} Chapter 15, Section 15.8) we can process as in \cite{BH} (Chapter 3) and to obtain the sequence $(p_t)_{tb \in \N_*}$ as the component in ${\ell}_1(\N_*, Y^*)$ of an element of ${\ell}_{\infty}(\N_*, Y^*)^*$.

\vskip2mm
\noindent
{\bf Acknowledgements.} I thank the reviewers very much for helping me improve the contents of the paper, especially the one who kept me from making a mathematical mistake and whose suggestions were very useful and constructive.


\begin{thebibliography}{00}
%
\bibitem{ATF} Alexeev VM, Tihomirov VM, Fomin SV (1982) Commande optimale (French) MIR, Moscow
%
\bibitem{AB} Aliprantis CD, Border KC (1999) Infinite dimensional analysis, 2nd edn. Springer, Berlin
%
\bibitem{BB} Bachir M, Blot J (2017) Infinite dimensional multipliers and Pontryagin principles for discrete-time problems, Pure Appl. Funct. Anal. 2, 411-420.
%
\bibitem{Bl} Blot J (1985) Le th\'eor\`eme du rang en dimension infinie (French), C.R. Acad. Sci. Paris S\'er. I Math. 301, 755-757
%
\bibitem{Bl1} Blot J (1986) The rank theorem in infinite dimension, Nonlinear Anal 10.1009-1020
%
\bibitem{BC} Blot J, Cieutat P (2016) Completeness of sums of subspaces of bounded functions and applications, Commun. Math. Anal. 19, 43-61
%
\bibitem{BCr} Blot J, Crettez B (2004) On the smoothness of optimal paths, Decis Econ Finance 27.1-34
%
\bibitem{BH} Blot J, Hayek N (2014) Infinite horizon optimal control in the discrete-time framework. Springer, New York
%
\bibitem{Br} Brezis H (2011) Functional analysis, Sobolev spaces and partial differential equations, Springer, New York
%
\bibitem{Ja} Janin R (1984) Directional derivative of the marginal function in nonlinear programming, Mathematical Programming Study 21.110-126
%
\bibitem{La} Lang S (1993) Real and functional analysis, third edn, Springer-Verlag, New York
%
\bibitem{Le} Leonard IE (1976) Banach sequence spaces, J. Math. Anal. Appl. 84, 245-265
%
\bibitem{RF} Rohlin VA, Fuchs DB (1981) Premier cours de topologie (French), MIR, Moscow




















.
\end{thebibliography}
\end{document}